 \newtheoremstyle{mytheorem}
 {3pt}
 {3pt}
 {\slshape}
 {}
 {\bfseries}
 {.}
 { }
 {}
\numberwithin{equation}{section}
\theoremstyle{theorem}
\newtheorem{theorem}{Theorem}[section]
\newtheorem*{theorem*}{Theorem}
\newtheorem{corollary}[theorem]{Corollary}
\newtheorem{proposition}[theorem]{Proposition}
\theoremstyle{definition}
\newtheorem{remark}{Remark}[section]
\newtheorem*{remark*}{Remark}
\newtheorem*{remarks*}{Remarks}
\newtheoremstyle{named}{}{}{\itshape}{}{\bfseries}{.}{.5em}{#1\thmnote{ #3}}
\theoremstyle{named}
\newcommand{\Keywords}[1]{\ifthenelse{\isempty{#1}}{}{\smallskip \smallskip \noindent \textbf{Keywords}. #1}}
\newcommand{\MSC}[2][2010]{\ifthenelse{\isempty{#2}}{}{\smallskip \smallskip \noindent \textbf{#1MSC}. #2}}
\newcommand{\abstractnote}[1]{\ifthenelse{\isempty{#1}}{}{\smallskip \smallskip \noindent \textsuperscript{\dag}#1}}
\def\specialsection{\@startsection{section}{1}%
  \z@{\linespacing\@plus\linespacing}{.5\linespacing}%
  {\normalfont}}
\def\section{\@startsection{section}{1}%
  \z@{.7\linespacing\@plus\linespacing}{.5\linespacing}%
  {\normalfont\scshape}}
\patchcmd{\@settitle}{\uppercasenonmath\@title}{\Large\boldmath}{}{}
\patchcmd{\@settitle}{\begin{center}}{\begin{flushleft}}{}{}
\patchcmd{\@settitle}{\end{center}}{\end{flushleft}}{}{}
\patchcmd{\@setauthors}{\MakeUppercase}{\normalsize}{}{}
\patchcmd{\@setauthors}{\centering}{\raggedright}{}{}
\patchcmd{\section}{\scshape}{\large\bfseries\boldmath}{}{}
\patchcmd{\subsection}{\bfseries}{\bfseries\boldmath}{}{}
\renewcommand{\@secnumfont}{\bfseries}
\patchcmd{\@startsection}{\@afterindenttrue}{\@afterindentfalse}{}{}
\patchcmd{\abstract}{\leftmargin3pc}{\leftmargin1pc}{}{}
\def\maketitle{\par
  \@topnum\z@ 
  \@setcopyright
  \thispagestyle{empty}
  \ifx\@empty\shortauthors \let\shortauthors\shorttitle
  \else \andify\shortauthors
  \fi
  \@maketitle@hook
  \begingroup
  \@maketitle
  \toks@\@xp{\shortauthors}\@temptokena\@xp{\shorttitle}%
  \toks4{\def\\{ \ignorespaces}}
  \edef\@tempa{%
    \@nx\markboth{\the\toks4
      \@nx\MakeUppercase{\the\toks@}}{\the\@temptokena}}%
  \@tempa
  \endgroup
  \c@footnote\z@
  \@cleartopmattertags
}
\newcommand{\mex}{\operatorname{mex}}
\newcommand{\smex}{\sigma\!\operatorname{mex}}
\newcommand{\maex}{\operatorname{maex}}
\newcommand{\smaex}{\sigma\!\operatorname{maex}}
\newcommand{\sL}{\sigma\!L}
\title{Partitions and the maximal excludant}
\author[S. Chern]{Shane Chern}
\address{Department of Mathematics, Penn State University, University Park, PA 16802, USA}
\email{shanechern@psu.edu}
\date{}
\begin{document}

%

\maketitle

\begin{abstract}

For each nonempty integer partition $\pi$, we define the maximal excludant of $\pi$ to be the largest nonnegative integer smaller than the largest part of $\pi$ that is not a part of $\pi$. Let $\sigma\!\operatorname{maex}(n)$ be the sum of maximal excludants over all partitions of $n$. We show that the generating function of $\sigma\!\operatorname{maex}(n)$ is closely related to a mock theta function studied by Andrews \textit{et al.} and Cohen. Further, we show that, as $n\to \infty$, $\sigma\!\operatorname{maex}(n)$ is asymptotic to the sum of largest parts of all partitions of $n$. Finally, the expectation of the difference of the largest part and the maximal excludant over all partitions of $n$ is shown to converge to $1$ as $n\to \infty$.

\Keywords{Partition, maximal excludant, mock theta function, asymptotics.}

\MSC{05A17, 05A19, 11P84.}
\end{abstract}

\section{Introduction}

In a recent paper \cite{AN2019}, Andrews and Newman studied the \textit{minimal excludant} of an integer partition $\pi$, which is the smallest positive integer that is not a part of $\pi$. Since a nonempty partition $\pi$ is a finite sequence of positive integers, we may also study the \textit{maximal excludant} of $\pi$, by which we mean the largest nonnegative integer smaller than the largest part of $\pi$ that is not a part of $\pi$. For example, $5$ has seven partitions: $5$, $4+1$, $3+2$, $3+1+1$, $2+2+1$, $2+1+1+1$ and $1+1+1+1+1$, the maximal excludants of which are respectively $4$, $3$, $1$, $2$, $0$, $0$ and $0$.

Let $\mex(\pi)$ and $\maex(\pi)$ denote respectively the minimal and maximal excludant of $\pi$. Andrews and Newman further investigated the function
$$\smex(n):=\sum_{\pi\vdash n}\mex(\pi)$$
in which the summation is over all partitions of $n$. They proved that the generating function of $\smex(n)$ satisfies
\begin{equation}
\sum_{n\ge 0}\smex(n)q^n=(-q;q)_\infty^2,
\end{equation}
where we adopt the conventional $q$-Pochhammer symbols:
\begin{gather*}
(A;q)_n:=\prod_{k=0}^{n-1}(1-Aq^k)\\
\intertext{and}
(A;q)_\infty:=\lim_{n\to \infty}(A;q)_n.
\end{gather*}
Likewise, we may define another function
$$\smaex(n):=\sum_{\pi\vdash n}\maex(\pi)$$
where, again, the summation runs over all partitions of $n$. In this paper, we are to study the generating function of $\smaex(n)$. As we shall see in Theorem \ref{th:main}, unlike the generating function of $\smex(n)$, which is a weight $0$ modular form, the generating function of $\smaex(n)$ is closely related to a mock theta function studied in two side-by-side papers of Andrews \textit{et al.}~\cite{ADH1988} and Cohen \cite{Coh1988}.

\begin{theorem}\label{th:main}
	We have
	\begin{align}
		\sum_{n\ge 1}\smaex(n)q^n&=\sum_{k\ge 1}\frac{k}{(q;q)_{k-1}}\sum_{m\ge 1}q^{m(k+1)}(-q;q)_{m-1}\label{eq:main-1}\\
		&=\frac{1}{(q;q)_\infty}\Bigg(\sum_{n\ge 1}\frac{q^n}{1-q^n}-\sum_{n\ge 1}q^n (q^2;q^2)_{n-1}\Bigg)\label{eq:main-2}\\
		&=\frac{1}{(q;q)_\infty}\Bigg(\sum_{n\ge 1}\frac{q^n}{1-q^n}+\sum_{n\ge 1}\frac{(-1)^n q^{n^2}}{(q;q^2)_n}\Bigg).\label{eq:main-3}
	\end{align}
\end{theorem}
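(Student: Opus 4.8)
The plan is to prove the first equality \eqref{eq:main-1} combinatorially, and then to pass from \eqref{eq:main-1} to \eqref{eq:main-2} and from \eqref{eq:main-2} to \eqref{eq:main-3} by elementary $q$-series manipulations.

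For \eqref{eq:main-1} I would write $\smaex(n)=\sum_{k\ge 1}k\,p_k(n)$, where $p_k(n)$ counts the partitions $\pi\vdash n$ with $\maex(\pi)=k$. Such a $\pi$ splits into two independent blocks: the parts $<k$, which are arbitrary (generating function $1/(q;q)_{k-1}$), and the parts $>k$, which must occupy a full consecutive run $k+1,k+2,\dots,L$ up to the largest part $L$ (every value present, while $k$ itself is absent). The key step is to compute the generating function of the top block by \emph{conjugation}: a partition whose set of distinct parts is exactly $\{k+1,\dots,k+m\}$ transposes to one whose largest part occurs exactly $k+1$ times, all remaining parts being distinct and smaller. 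Summing over the largest part $m$ of the conjugate gives $\sum_{m\ge 1}q^{m(k+1)}(-q;q)_{m-1}$ for the top block; multiplying by $1/(q;q)_{k-1}$ and by the weight $k$, and summing over $k$, yields \eqref{eq:main-1}.

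For \eqref{eq:main-1}$\to$\eqref{eq:main-2} I would interchange the two summations and isolate $S_m:=\sum_{k\ge 1}k\,q^{mk}/(q;q)_{k-1}$. Differentiating Euler's identity $\sum_{j\ge 0}x^j/(q;q)_j=1/(x;q)_\infty$ and setting $x=q^m$ gives $S_m=\frac{q^m(q;q)_{m-1}}{(q;q)_\infty}\bigl(1+\sum_{l\ge m}\frac{q^l}{1-q^l}\bigr)$, where I also use $1/(q^m;q)_\infty=(q;q)_{m-1}/(q;q)_\infty$. Substituting this and using $(-q;q)_{m-1}(q;q)_{m-1}=(q^2;q^2)_{m-1}$ extracts a global factor $1/(q;q)_\infty$ and reduces the claim to
\[
\sum_{m\ge 1}q^{2m}(q^2;q^2)_{m-1}\Bigl(1+\sum_{l\ge m}\tfrac{q^l}{1-q^l}\Bigr)=\sum_{n\ge 1}\tfrac{q^n}{1-q^n}-\sum_{n\ge 1}q^n(q^2;q^2)_{n-1}.
\]
This I would prove by the telescoping $q^{2m}(q^2;q^2)_{m-1}=(q^2;q^2)_{m-1}-(q^2;q^2)_m$ (so the inner partial sums collapse to $1-(q^2;q^2)_l$) together with the factorization $\frac{q^l(q^2;q^2)_l}{1-q^l}=q^l(q^2;q^2)_{l-1}(1+q^l)$; after swapping the $m$- and $l$-sums everything cancels. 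For \eqref{eq:main-2}$\to$\eqref{eq:main-3} it suffices to prove $\sum_{n\ge 1}q^n(q^2;q^2)_{n-1}=\sum_{n\ge 1}\frac{(-1)^{n+1}q^{n^2}}{(q;q^2)_n}$, which I would obtain by expanding $(q^2;q^2)_{n-1}$ via the finite $q$-binomial theorem (base $q^2$), interchanging the double sum, and collapsing the $n$-sum with the $q$-binomial series $\sum_{l\ge 0}\binom{j+l}{j}_{q^2}q^l=1/(q;q^2)_{j+1}$; the exponent $j^2+2j+1=(j+1)^2$ then appears exactly, producing the mock theta function of Andrews \textit{et al.} and Cohen.

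I expect the main obstacle to be the top-block computation in \eqref{eq:main-1}: the naive run generating function $\sum_{m}\prod_{j=1}^{m}q^{k+j}/(1-q^{k+j})$ bears no term-by-term resemblance to $\sum_{m}q^{m(k+1)}(-q;q)_{m-1}$, and only the conjugation explains why the clean form is correct. The secondary difficulty is the bookkeeping in \eqref{eq:main-1}$\to$\eqref{eq:main-2}, namely evaluating $S_m$ and spotting the two telescopings; by contrast, the passage to the mock theta form in \eqref{eq:main-3}, despite its appearance, turns out to be entirely elementary.
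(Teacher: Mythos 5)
Your proposal is correct and follows essentially the same route as the paper for the two substantive equalities: the same decomposition of a partition with $\maex=k$ into an arbitrary block of parts below $k$ and a gap-free block above $k$, the same conjugation trick turning the gap-free block into ``largest part repeated $k+1$ times, rest distinct'' to get \eqref{eq:main-1}, and the same Euler-identity differentiation (your $S_m$ is exactly the paper's $\bigl[\partial_z\, z/(zq^m;q)_\infty\bigr]_{z=1}$ computation) followed by the telescoping $\sum_{m=1}^{l}q^{2m}(q^2;q^2)_{m-1}=1-(q^2;q^2)_l$ to get \eqref{eq:main-2}. The one genuine divergence is the passage to \eqref{eq:main-3}: the paper simply invokes Cohen's identity \eqref{eq:Coh}, which it justifies combinatorially via the three-variable generalization \eqref{eq:3-var} (both sides generating partitions whose largest part occurs once and whose remaining distinct parts occur twice), whereas you prove the identity analytically by expanding $(q^2;q^2)_{n-1}$ with the finite $q$-binomial theorem and resumming with $\sum_{l\ge 0}\genfrac{[}{]}{0pt}{}{j+l}{j}_{q^2}q^l=1/(q;q^2)_{j+1}$ --- a computation that checks out (the exponent $j^2+j+(j+1)=(j+1)^2$ does appear exactly). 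Your version is self-contained and mechanical; the paper's is shorter and exposes the bijective content and the natural three-variable refinement, but either is a complete proof of that step.
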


\begin{remark}
	Using a formula due to Andrews \textit{et al.}~\cite{ADH1988}, we are able to give a formula of $\smaex(n)$. This will be discussed in \S{}\ref*{sec:formula}.
\end{remark}

Now recall that if $L(\pi)$ denotes the largest part of a partition $\pi$ and $$\sL(n)=\sum_{\pi\vdash n}L(\pi)$$
denotes the sum of largest parts of all partitions of $n$, a standard result tells us that
\begin{equation}\label{eq:largest-pt}
\sum_{n\ge 1}\sL(n)q^n=\frac{1}{(q;q)_\infty}\sum_{n\ge 1}\frac{q^n}{1-q^n}.
\end{equation}
In light of \eqref{eq:main-2} and \eqref{eq:largest-pt}, we have the following corollary.

\begin{corollary}
	We have
	\begin{equation}\label{eq:dif}
	\sum_{n\ge 1}\big(\sL(n)-\smaex(n)\big)q^n=\frac{1}{(q;q)_\infty}\sum_{n\ge 1}q^n (q^2;q^2)_{n-1}.
	\end{equation}
\end{corollary}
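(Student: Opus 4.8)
The plan is to obtain \eqref{eq:dif} by directly subtracting the generating function of $\smaex(n)$ from that of $\sL(n)$; all of the substantive work has already been carried out in Theorem~\ref{th:main}, so that only a termwise cancellation remains. Concretely, I would start from the standard identity \eqref{eq:largest-pt} for $\sum_{n\ge 1}\sL(n)q^n$ and from the middle form \eqref{eq:main-2} of Theorem~\ref{th:main} for $\sum_{n\ge 1}\smaex(n)q^n$, precisely because \eqref{eq:main-2} is written in the shape that displays the common prefactor $1/(q;q)_\infty$ alongside the Lambert-type sum $\sum_{n\ge 1} q^n/(1-q^n)$.

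The key step is then to form the difference of the two coefficient sequences, which corresponds to subtracting the two generating functions:
\begin{align*}
\sum_{n\ge 1}\big(\sL(n)-\smaex(n)\big)q^n
&=\frac{1}{(q;q)_\infty}\sum_{n\ge 1}\frac{q^n}{1-q^n}\\
&\quad-\frac{1}{(q;q)_\infty}\Bigg(\sum_{n\ge 1}\frac{q^n}{1-q^n}-\sum_{n\ge 1}q^n (q^2;q^2)_{n-1}\Bigg).
\end{align*}
The two copies of $\frac{1}{(q;q)_\infty}\sum_{n\ge 1}\frac{q^n}{1-q^n}$ cancel, leaving exactly the right-hand side of \eqref{eq:dif}.

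Because the difference of two generating functions is the generating function of the difference of their coefficient sequences, there is no convergence or rearrangement subtlety here beyond what is already implicit in \eqref{eq:largest-pt} and \eqref{eq:main-2}. I therefore expect no genuine obstacle: the corollary is an immediate bookkeeping consequence of having recorded Theorem~\ref{th:main} in the form \eqref{eq:main-2}, and the only point worth verifying is the sign, namely that the term $\sum_{n\ge 1} q^n (q^2;q^2)_{n-1}$ survives with a plus sign after the cancellation, as claimed in \eqref{eq:dif}.
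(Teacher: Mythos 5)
Your proposal is correct and is exactly the paper's argument: the corollary is stated "in light of \eqref{eq:main-2} and \eqref{eq:largest-pt}," i.e., by subtracting the generating function in \eqref{eq:main-2} from the one in \eqref{eq:largest-pt} and cancelling the common term $\frac{1}{(q;q)_\infty}\sum_{n\ge 1}\frac{q^n}{1-q^n}$. The sign check you flag also comes out as claimed, since subtracting a negated term yields the plus sign in \eqref{eq:dif}.
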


It was shown by Kessler and Livingston \cite{KL1976} that $\sL(n)$ satisfies the asymptotic formula
\begin{equation}\label{eq:as-0}
\sL(n)\sim \frac{\log \frac{6n}{\pi^2}+2\gamma}{4\pi \sqrt{2n}}e^{\pi \sqrt{\frac{2n}{3}}}
\end{equation}
where $\gamma$ is the Euler--Mascheroni constant.

Now we shall show the asymptotic relations as follows.

\begin{theorem}\label{th:asym}
	We have, as $n\to\infty$,
	\begin{equation}\label{eq:as-1}
	\sL(n)-\smaex(n)\sim \frac{1}{4\sqrt{3}n}e^{\pi \sqrt{\frac{2n}{3}}},
	\end{equation}
	and, a fortiori,
	\begin{equation}\label{eq:as-2}
	\smaex(n)\sim \sL(n).
	\end{equation}
	Further, if $\mathbb{E}_n$ denotes the expectation of the difference of the largest part and the maximal excludant over all partitions of $n$, then
	\begin{equation}\label{eq:as-3}
	\lim_{n\to\infty}\mathbb{E}_n = 1.
	\end{equation}
\end{theorem}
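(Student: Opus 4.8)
The plan is to reduce all three assertions to a single coefficient asymptotic. Set $c_n:=\sL(n)-\smaex(n)$, so that \eqref{eq:dif} reads $\sum_{n\ge1}c_nq^n=f(q)/(q;q)_\infty$ with $f(q):=\sum_{n\ge1}q^n(q^2;q^2)_{n-1}$. Writing $p(n)$ for the number of partitions of $n$, recall $\sum_{n\ge0}p(n)q^n=1/(q;q)_\infty$ and the Hardy--Ramanujan estimate $p(n)\sim\frac{1}{4\sqrt{3}\,n}e^{\pi\sqrt{2n/3}}$. Since every partition $\pi$ satisfies $\maex(\pi)<L(\pi)$, each $c_n>0$, and $\mathbb{E}_n=c_n/p(n)$. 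Thus \eqref{eq:as-1}, \eqref{eq:as-2} and \eqref{eq:as-3} all follow once I prove $c_n\sim p(n)$: then \eqref{eq:as-1} is immediate, \eqref{eq:as-3} reads $\mathbb{E}_n=c_n/p(n)\to1$, and \eqref{eq:as-2} holds because the prefactor in \eqref{eq:as-0} is of order $n^{-1/2}\log n$, which dominates the order-$n^{-1}$ prefactor of $c_n$, so that $c_n=o(\sL(n))$ and hence $\smaex(n)=\sL(n)-c_n\sim\sL(n)$.

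The first key step is the radial limit $f(q)\to1$ as $q\to1^-$. Here I would exploit the telescoping identity $\sum_{n\ge1}q^{2n}(q^2;q^2)_{n-1}=1-(q^2;q^2)_\infty$, which follows from $q^{2n}(q^2;q^2)_{n-1}=(q^2;q^2)_{n-1}-(q^2;q^2)_n$. Subtracting this from $f(q)$ gives $f(q)-\bigl(1-(q^2;q^2)_\infty\bigr)=\sum_{n\ge1}q^n(1-q^n)(q^2;q^2)_{n-1}$, a sum of nonnegative terms. Using $1-q^n\le n(1-q)$ together with the elementary bound $(q^2;q^2)_{n-1}\le(2(1-q))^{n-1}(n-1)!$, which controls the terms with $n\lesssim(1-q)^{-1}$, and the exponential smallness of $(q^2;q^2)_{n-1}$ once $n\gtrsim(1-q)^{-1}$, I would show $\sum_{n\ge1}nq^n(q^2;q^2)_{n-1}=O(1)$, so that the displayed difference is $O(1-q)$. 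Since $(q^2;q^2)_\infty\to0$, this yields $f(q)=1+O(1-q)$ as $q\to1^-$.

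Consequently, with $t=-\log q\to0^+$, the generating function of $c_n$ satisfies $f(q)/(q;q)_\infty\sim\sqrt{t/(2\pi)}\,e^{\pi^2/(6t)}$, which is precisely the radial asymptotic of $1/(q;q)_\infty$. I would then invoke Ingham's Tauberian theorem: for a power series $\sum a(n)q^n$ with nonnegative, nondecreasing coefficients satisfying $\sum a(n)e^{-nt}\sim\lambda\,t^{1/2}e^{A/t}$, one has $a(n)\sim\frac{\lambda}{2\sqrt{\pi}}A^{1/2}n^{-1}e^{2\sqrt{An}}$; with $\lambda=(2\pi)^{-1/2}$ and $A=\pi^2/6$ this reproduces $\frac{1}{4\sqrt{3}\,n}e^{\pi\sqrt{2n/3}}$. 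Applied to $\sum c_nq^n$ it gives $c_n\sim\frac{1}{4\sqrt{3}\,n}e^{\pi\sqrt{2n/3}}\sim p(n)$, i.e.\ \eqref{eq:as-1}, and then \eqref{eq:as-2} and \eqref{eq:as-3} follow as in the first paragraph.

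I expect the main obstacle to lie in verifying the Tauberian hypotheses rather than in the radial limit. Nonnegativity of $c_n$ is automatic, but Ingham's theorem also requires a monotonicity (or comparable side) condition, and this is delicate here because the coefficients of $f(q)$ are the erratic, sign-changing coefficients of the Andrews--Dyson--Hickerson/Cohen mock theta function appearing in \eqref{eq:main-3}; they are not absolutely summable, so the convolution $c_n=\sum_m\bigl([q^m]f(q)\bigr)p(n-m)$ cannot be estimated termwise. I would discharge this either by proving $c_{n+1}\ge c_n$ directly---most naturally via the combinatorial reading of $L(\pi)-\maex(\pi)$ as the number of consecutive integers $L(\pi),L(\pi)-1,\dots$ occurring as parts from the top down, which suggests a statistic-preserving injection from partitions of $n$ to partitions of $n+1$---or by appealing to a version of Ingham's theorem that requires only the slow growth of $[q^n]f(q)$ furnished by the estimates of \cite{ADH1988}.
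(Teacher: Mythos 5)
Your overall strategy coincides with the paper's: both reduce everything to the single asymptotic $c_n:=\sL(n)-\smaex(n)\sim p(n)$, obtained by applying Ingham's Tauberian theorem to the generating function \eqref{eq:dif}, and both then deduce \eqref{eq:as-2} from the extra $\sqrt{n}\log n$ in the prefactor of \eqref{eq:as-0} and \eqref{eq:as-3} from $\mathbb{E}_n=c_n/p(n)$. The one genuinely different ingredient is the radial limit of the numerator $f(q)=\sum_{n\ge1}q^n(q^2;q^2)_{n-1}$. The paper identifies $f(q)$ with $-\tfrac12\sigma^*(q)$ as in \eqref{eq:sigma*} and quotes Zagier's asymptotic expansion $-\sigma^*(e^{-t})=2+2t+5t^2+\cdots$, which rests on the quantum-modular identity \eqref{eq:s-s*}; you instead prove $f(q)=1+O(1-q)$ elementarily, via the telescoping identity $\sum_{n\ge1}q^{2n}(q^2;q^2)_{n-1}=1-(q^2;q^2)_\infty$ and the bound $(q^2;q^2)_{n-1}\le(2(1-q))^{n-1}(n-1)!$. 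Your route is more self-contained and entirely adequate here, since Ingham only needs the leading term $\lambda t^{1/2}e^{\pi^2/6t}$ (with the $O(1-q)$ correction absorbed into $\sim$); what the paper's route buys is the full asymptotic expansion and the conceptual link to the Maass/quantum modular form, which your estimates do not recover. Your sketch of the bound $\sum_n nq^n(q^2;q^2)_{n-1}=O(1)$ is believable but would need the splitting at $n\asymp(1-q)^{-1}$ carried out carefully: the factorial bound alone diverges if summed over all $n$, so the tail must be handled separately using $(q^2;q^2)_{n-1}\le(q^2;q^2)_{\lfloor (2(1-q))^{-1}\rfloor}=O(e^{-c/(1-q)})$.

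The one place where your argument is genuinely incomplete is the monotonicity hypothesis of Ingham's theorem, which you correctly identify as the crux but only gesture at. Your first proposed remedy is the right one and is exactly what the paper does: the injection $\mathcal{P}_n\to\mathcal{P}_{n+1}$ appending a part equal to $1$ satisfies $L(\phi_n(\pi))=L(\pi)$ and $\maex(\phi_n(\pi))\le\maex(\pi)$ (with strict inequality precisely when $\maex(\pi)=1$, in which case the maximal excludant drops to $0$). So the injection is not statistic-preserving, as you suggest, but it weakly \emph{increases} $L-\maex$, which is all that is needed: summing over the image and using nonnegativity of the statistic on $\mathcal{P}_{n+1}\setminus\phi_n(\mathcal{P}_n)$ gives $c_{n+1}\ge c_n$. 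Your alternative remedy --- a Tauberian theorem needing only slow growth of the coefficients of $f$ --- is riskier, since those coefficients change sign and are not absolutely summable, exactly as you note; I would not rely on it when the two-line injection is available.
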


\begin{remark}
	Notice that for any nonempty partition $\pi$, we always have $L(\pi)-\maex(\pi)\ge 1$. Hence, for all $n\ge 1$, we have $\mathbb{E}_n\ge 1$. Further, for all $n\ge 3$, it is always able to find a partition $\pi$ of $n$ with $L(\pi)-\maex(\pi)> 1$ (if $n$ is odd, then such a partition could be $((n+1)/2,(n-1)/2)$; if $n$ is even, then such a partition could be $(n/2,(n-2)/2,1)$). This implies that $\mathbb{E}_n> 1$ for $n\ge 3$.
\end{remark}

\section{A formula of $\smaex(n)$}\label{sec:formula}

\subsection{A mock theta function}

In his paper \cite{Coh1988}, Cohen observed the following identity
\begin{equation}\label{eq:Coh}
\sum_{n\ge 1}\frac{(-1)^n q^{n^2}}{(q;q^2)_n}=-\sum_{n\ge 1}q^n (q^2;q^2)_{n-1}.
\end{equation}
Hence, \eqref{eq:main-2} and \eqref{eq:main-3} are equivalent.

It is worth mentioning that Cohen's identity \eqref{eq:Coh} can be generalized to a three-variable identity as follows.

\begin{proposition}\label{prop:3-var}
	We have
	\begin{equation}\label{eq:3-var}
	\sum_{n\ge 1}\frac{x^n q^{n^2}}{(yq;q^2)_n}=\sum_{n\ge 1}x y^{n-1}q^n(-xq^2/y;q^2)_{n-1}.
	\end{equation}
\end{proposition}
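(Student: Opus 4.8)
The plan is to add $1$ to both sides and show that the two resulting functions of $x$ (with $y,q$ kept as parameters) satisfy one and the same $q$-functional equation, whose solution in the ring of formal power series in $q$ is unique. Concretely, I would set
\[
H(x,y):=\sum_{n\ge 0}\frac{x^nq^{n^2}}{(yq;q^2)_n},\qquad
K(x,y):=1+\sum_{n\ge 1}xy^{n-1}q^n(-xq^2/y;q^2)_{n-1},
\]
so that \eqref{eq:3-var} is equivalent to $H(x,y)=K(x,y)$. First I would check that both are genuine elements of $\mathbb{Q}[x,y][[q]]$. For $H$ this holds because $1/(yq;q^2)_n=1+O(q)$ has coefficients polynomial in $y$ and the $n$-th summand has $q$-order $n^2$. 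For $K$ the apparent pole at $y=0$ is illusory: reindexing gives $K=1+xq\,G$ with $G(x,y):=\sum_{m\ge 0}(yq)^m(-xq^2/y;q^2)_m$, and since $y^{m}(-xq^2/y;q^2)_{m}=\prod_{j=0}^{m-1}(y+xq^{2j+2})$ is a polynomial, the $(m{+}1)$-st summand of $K$ equals $xq^{m+1}\prod_{j=0}^{m-1}(y+xq^{2j+2})$ and has $q$-order exactly $m+1$. Hence both series lie in $\mathbb{Q}[x,y][[q]]$ with coefficients that are honest polynomials in $x,y$.

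Next I would derive the functional equation for $H$. Shifting $n\mapsto n+1$ and using $(n+1)^2=n^2+2n+1$ together with $(yq;q^2)_{n+1}=(1-yq)\,((yq^2)q;q^2)_n$, one obtains at once
\[
H(x,y)=1+\frac{xq}{1-yq}\,H(xq^2,yq^2). \tag{$*$}
\]
The heart of the matter is to show that $K$ satisfies the very same equation $(*)$. Substituting $K=1+xq\,G$ into $(*)$ and clearing the factor $xq$ reduces the claim to
\[
(1-yq)\,G(x,y)=1+xq^3\,G(xq^2,yq^2), \tag{$**$}
\]
where I have used $G(xq^2,yq^2)=\sum_{m\ge0}(yq^3)^m(-xq^2/y;q^2)_m$.

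I expect $(**)$ to be the main obstacle, but it should yield to a telescoping. Writing $(1-yq)G(x,y)=\sum_{m\ge0}(yq)^m(-xq^2/y;q^2)_m-\sum_{m\ge1}(yq)^m(-xq^2/y;q^2)_{m-1}$ and invoking the one-term difference
\[
(-xq^2/y;q^2)_m-(-xq^2/y;q^2)_{m-1}=\frac{xq^{2m}}{y}\,(-xq^2/y;q^2)_{m-1},
\]
the $m=0$ term supplies the leading $1$ while the remaining sum collapses, after the reindexing $m\mapsto m+1$, to $xq^3\sum_{m\ge0}(yq^3)^m(-xq^2/y;q^2)_m=xq^3\,G(xq^2,yq^2)$, which is exactly $(**)$. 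Finally I would conclude by uniqueness: setting $D:=H-K$, equation $(*)$ gives $D(x,y)=\tfrac{xq}{1-yq}D(xq^2,yq^2)$; since $D\in\mathbb{Q}[x,y][[q]]$ has coefficients of nonnegative degree in $x,y$, the substitution $x\mapsto xq^2,\ y\mapsto yq^2$ cannot lower the $q$-order, so the right-hand side has $q$-order strictly larger than that of $D$ unless $D=0$. Therefore $H=K$, which is precisely \eqref{eq:3-var}; Cohen's identity \eqref{eq:Coh} then follows by specializing $x=-1$ and $y=1$.
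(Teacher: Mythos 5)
Your proof is correct, but it takes a genuinely different route from the paper's. You verify that both sides of \eqref{eq:3-var}, after adding $1$, satisfy the same $q$-difference equation $F(x,y)=1+\tfrac{xq}{1-yq}F(xq^2,yq^2)$ and then conclude by a uniqueness argument in $\mathbb{Q}[x,y][[q]]$; all three ingredients --- the index shift for $H$, the telescoping identity $(1-yq)G(x,y)=1+xq^3G(xq^2,yq^2)$ obtained from the one-term difference of Pochhammer symbols, and the $q$-order count showing that $D=H-K$ must vanish --- check out, and your care with the apparent pole at $y=0$ and with membership in $\mathbb{Q}[x,y][[q]]$ is exactly what makes the uniqueness step legitimate. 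The paper instead disposes of the identity in one sentence: both sides are read off as the generating function for partitions in which the largest part occurs exactly once and every other distinct part occurs exactly twice, with the exponent of $x$ recording the number of distinct parts and the exponent of $y$ recording the largest part minus that number (the left side assembles such a partition from the minimal staircase of weight $n^2$, the right side conditions on the largest part). The combinatorial reading is shorter and explains what the statistics marked by $x$ and $y$ mean, which fits the surrounding discussion of $\sigma^*(q)$; your functional-equation argument is longer but entirely mechanical and self-contained, makes the formal convergence issues explicit, and generalizes routinely if one wants further companions to Cohen's identity \eqref{eq:Coh}.
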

Taking $x=-1$ and $y=1$ in \eqref{eq:3-var} recovers \eqref{eq:Coh}. Further, this identity can be treated as a companion to \cite[p.~29, Example 6]{And1976}:
$$\sum_{n\ge 0}\frac{x^n q^{n^2}}{(y;q^2)_{n+1}}= \sum_{n\ge 0} y^n (-xq/y;q^2)_n.$$

\begin{proof}[Proof of Proposition \ref{prop:3-var}]
	Both sides of \eqref{eq:3-var} can be treated as the generating function of partitions in which the largest part appears only once and all the remaining distinct parts appear exactly twice. Here, the exponent of $x$ represents the number of distinct parts in this partition and the exponent of $y$ represents the largest part minus the number of distinct parts.
\end{proof}

\begin{remark}
	Let us denote
	\begin{align}\label{eq:sigma*}
	\sigma^*(q):=2\sum_{n\ge 1}\frac{(-1)^n q^{n^2}}{(q;q^2)_n}=-2\sum_{n\ge 1}q^n (q^2;q^2)_{n-1}.
	\end{align}
	It is also necessary to introduce its companion
	\begin{align}\label{eq:sigma}
	\sigma(q):=\sum_{n\ge 0}\frac{q^{n(n+1)/2}}{(-q;q)_n}=1-\sum_{n\ge 1}(-1)^n q^n (q;q)_{n-1}.
	\end{align}
	The two $q$-hypergeometric functions are of substantial research interest along the following lines. First, Andrews \textit{et al.}~\cite{ADH1988} showed that the coefficients in the expansions of $\sigma(q)$ and $\sigma^*(q)$ are very small. In fact, these coefficients are related with the arithmetic of the field $\mathbb{Q}(\sqrt{6})$. Second, let us define a sequence $\{T(n)\}_{n\in 24\mathbb{Z}+1}$ by
	\begin{equation}\label{eq:T}
	q\sigma(q^{24})=\sum_{n\ge 0}T(n)q^{n} \quad\text{and}\quad q^{-1}\sigma^{*}(q^{24})=\sum_{n<0}T(n)q^{-n}.
	\end{equation}
	Cohen \cite{Coh1988} proved that the function (in which $K_0(x)$ is the Bessel function)
	$$\phi_0(\tau):=y^{1/2}\sum_{n\in 24\mathbb{Z}+1} T(n)K_0(2\pi |n|y/24)e^{2\pi i n x/24}\qquad(\tau=x+iy\in\mathbb{H})$$
	is a Maass wave form on the congruence group $\Gamma_0(2)$. This, in turn, explains the modularity nature of the identity
	$$q\sigma(q^{24})=\sum_{\substack{a,b\in\mathbb{Z}\\a>6|b|}}\left(\frac{12}{a}\right)(-1)^b q^{a^2-24b^2}.$$
	Third, by noticing the following relation due to Cohen \cite{Coh1988}:
	\begin{equation}\label{eq:s-s*}
	\sigma(q)=-\sigma^{*}(q^{-1})
	\end{equation}
	whenever $q$ is a root of unity (here the definitions of $\sigma(q)$ and $\sigma^*(q)$ at roots of unity are valid since the second summations in both \eqref{eq:sigma*} and \eqref{eq:sigma} are finite), Zagier \cite{Zag2010} is able to construct a quantum modular form $f:\mathbb{Q}\to\mathbb{C}$ by
	$$f(x):=q^{1/24}\sigma(q)=-q^{1/24}\sigma^{*}(q^{-1})$$
	where $q=e^{2\pi i x}$.
\end{remark}

\subsection{A formula of Andrews \textit{et al.}}

Let us define $T(n)$ ($n\in 6\mathbb{Z}+1$) by the excess of the number of inequivalent solutions of the Pell's equation
$$u^2-6v^2=n$$
with $u+3v\equiv \pm 1 \pmod{12}$ over the number of them with $u+3v\equiv \pm 5 \pmod{12}$. By investigating the arithmetic in $\mathbb{Q}(\sqrt{6})$, Andrews \textit{et al.}~\cite{ADH1988} showed that if $n$ has the prime factorization
$$n=p_1^{e_1}p_2^{e_2}\cdots p_r^{e_r}$$
where each $p_i\equiv 1 \pmod{6}$ or $p_i$ is the negative of a prime $\equiv 5 \pmod{6}$, then
$$T(n)=T(p_1^{e_1})T(p_2^{e_2})\cdots T(p_r^{e_r})$$
where
$$T(p^e)=\begin{cases}
0 & \text{if $p\not\equiv 1 \pmod{24}$ and $e$ is odd},\\
1 & \text{if $p\equiv 13,19 \pmod{24}$ and $e$ is even},\\
(-1)^{e/2} & \text{if $p\equiv 7 \pmod{24}$ and $e$ is even},\\
e+1 & \text{if $p\equiv 1 \pmod{24}$ and $T(p)=2$},\\
(-1)^e(e+1) & \text{if $p\equiv 1 \pmod{24}$ and $T(p)=-2$}.
\end{cases}$$

Andrews \textit{et al.}~further proved that if we restrict $T(n)$ to $n\in 24\mathbb{Z}+1$, then they coincide with the coefficients defined in \eqref{eq:T}. Hence, if we write
$$\sigma^{*}(q)=2\sum_{n\ge 1}\frac{(-1)^n q^{n^2}}{(q;q^2)_n}=2\sum_{n\ge 1}S^*(n)q^n,$$
then
$$S^*(n)=\frac{1}{2}T(1-24n).$$

Further, we have
$$\sum_{n\ge 1}d(n)q^n=\sum_{n\ge 1}\frac{q^n}{1-q^n}$$
where $d(n)=\sum_{d\mid n}1$ enumerates the number of positive divisors of $n$.

Consequently, \eqref{eq:main-3} gives us the following formula of $\smaex(n)$.

\begin{theorem}
	For $n\ge 1$, we have
	\begin{equation}
	\smaex(n)=\sum_{k=1}^n p(n-k)\bigg(d(k)+\frac{1}{2}T(1-24k)\bigg)
	\end{equation}
	where $p(n)$ denotes the number of partitions of $n$.
\end{theorem}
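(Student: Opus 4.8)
The plan is to read the formula straight off the representation \eqref{eq:main-3} by a single coefficient extraction, using the three power-series expansions collected immediately above the statement. All the genuine content — the identity \eqref{eq:main-3} itself, together with the multiplicative arithmetic of $T(n)$ coming from $\mathbb{Q}(\sqrt{6})$ — is already in hand, so what remains is purely a matter of assembling these pieces.

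First I would rewrite the parenthesized factor in \eqref{eq:main-3} as one power series. The first sum is the Lambert-series expansion of the divisor function, $\sum_{k\ge 1}\frac{q^k}{1-q^k}=\sum_{k\ge 1}d(k)q^k$. For the second sum I invoke $\sigma^{*}(q)=2\sum_{n\ge 1}S^*(n)q^n$ together with $S^*(k)=\tfrac12 T(1-24k)$, which gives $\sum_{k\ge 1}\frac{(-1)^k q^{k^2}}{(q;q^2)_k}=\tfrac12\sigma^{*}(q)=\sum_{k\ge 1}\tfrac12 T(1-24k)\,q^k$. Adding the two,
\begin{equation*}
\sum_{k\ge 1}\frac{q^k}{1-q^k}+\sum_{k\ge 1}\frac{(-1)^k q^{k^2}}{(q;q^2)_k}=\sum_{k\ge 1}\Big(d(k)+\tfrac12 T(1-24k)\Big)q^k .
\end{equation*}

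Next I would bring in the prefactor $1/(q;q)_\infty=\sum_{m\ge 0}p(m)q^m$, the generating function of the partition function $p(m)$. Substituting both expansions into \eqref{eq:main-3} and forming the Cauchy product, the coefficient of $q^n$ on the right-hand side becomes $\sum_{k=1}^{n}p(n-k)\big(d(k)+\tfrac12 T(1-24k)\big)$, while on the left-hand side it is $\smaex(n)$; equating the two yields the claimed formula.

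The main obstacle, such as it is, is entirely clerical: keeping the convolution indices aligned (the $k=0$ term drops out because the bracketed series begins at $k=1$) and observing that the whole computation takes place at the level of formal power series, so no question of convergence intervenes. Everything of substance has been imported from Theorem \ref{th:main} and from the work of Andrews \textit{et al.}~on the coefficients $T(n)$.
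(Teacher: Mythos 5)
Your proposal is correct and follows essentially the same route as the paper: the paper also obtains the formula directly from \eqref{eq:main-3} by expanding the Lambert series as $\sum_{k\ge 1}d(k)q^k$, identifying the second sum with $\tfrac12\sigma^*(q)=\sum_{k\ge 1}\tfrac12 T(1-24k)q^k$ via the result of Andrews \textit{et al.}, and taking the Cauchy product with $1/(q;q)_\infty=\sum_{m\ge 0}p(m)q^m$.
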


\section{Proof of Theorem \ref{th:main}}

The equivalence of \eqref{eq:main-2} and \eqref{eq:main-3} has already been shown in \S{}\ref{sec:formula}. It suffices to prove \eqref{eq:main-1} and \eqref{eq:main-2}.

Given a partition with maximal excludant $k$, it can be split into two components: the first component is a partition with parts not exceeding $k-1$ and the second component is a gap-free partition (i.e.~a partition in which the difference between each consecutive parts is at most $1$) with smallest part $k+1$. Further, by considering the conjugate, there is a bijection between gap-free partitions with smallest part $k+1$ and partitions in which the largest part repeats $k+1$ times and all remaining parts are distinct. Hence, if $g(k,n)$ counts the number of partitions of $n$ with maximal excludant $k$, we have the generating function identity
\begin{align}
G(z,q):&=\sum_{n\ge 1}\sum_{k\ge 1}g(k,n)z^k q^n\notag\\
&=\sum_{k\ge 1}\frac{z^k}{(q;q)_{k-1}}\sum_{m\ge 1}q^{m(k+1)}(-q;q)_{m-1}.\label{eq:G}
\end{align}

Now applying the operator $[\partial/\partial z]_{z=1}$ directly to $G(z,q)$ implies \eqref{eq:main-1}. Next, we prove \eqref{eq:main-2}. Recall that Euler's first sum \cite[Eq.~(2.2.5)]{And1976} tells us that
$$\sum_{k\ge 0}\frac{z^k}{(q;q)_k}=\frac{1}{(z;q)_\infty}.$$
In light of \eqref{eq:G}, we have
\begin{align*}
G(z,q)&=\sum_{m\ge 1}z q^{2m}(-q;q)_{m-1}\sum_{k\ge 0}\frac{(zq^m)^k}{(q;q)_k}\\
&=\sum_{m\ge 1}\frac{zq^{2m}(q^2;q^2)_{m-1}}{(q;q)_{m-1} (zq^m;q)_\infty}.
\end{align*}

Notice that
\begin{align*}
\Bigg[\frac{\partial}{\partial z}\frac{z}{(zq^m;q)_\infty}\Bigg]_{z=1}&=\Bigg[\frac{z}{(zq^m;q)_\infty}\frac{\partial}{\partial z}\log\frac{z}{(zq^m;q)_\infty}\Bigg]_{z=1}\\
&=\frac{1}{(q^m;q)_\infty}\Bigg[\frac{\partial}{\partial z}\bigg(\log z-\sum_{n\ge m}\log(1-zq^n)\bigg)\Bigg]_{z=1}\\
&=\frac{1}{(q^m;q)_\infty}\Bigg(1+\sum_{n\ge m}\frac{q^n}{1-q^n}\Bigg).
\end{align*}
Hence,
\begin{align*}
\sum_{n\ge 1}\smaex(n)q^n&=\Bigg[\frac{\partial}{\partial z}G(z,q)\Bigg]_{z=1}\\
&=\sum_{m\ge 1}\frac{q^{2m}(q^2;q^2)_{m-1}}{(q;q)_{m-1}}\Bigg[\frac{\partial}{\partial z}\frac{z}{(zq^m;q)_\infty}\Bigg]_{z=1}\\
&=\frac{1}{(q;q)_\infty}\sum_{m\ge 1} q^{2m}(q^2;q^2)_{m-1}\Bigg(1+\sum_{n\ge m}\frac{q^n}{1-q^n}\Bigg).
\end{align*}

An easy combinatorial argument implies that, for all $n\ge 1$, we have
$$\sum_{m=1}^n q^m (q;q)_{m-1}=1-(q;q)_n.$$
It follows that
\begin{align*}
\sum_{n\ge 1}\smaex(n)q^n&=\frac{1}{(q;q)_\infty}\Bigg(1-(q^2;q^2)_\infty+\sum_{n\ge 1}\frac{q^n}{1-q^n}\big(1-(q^2;q^2)_n\big)\Bigg)\\
&=\frac{1}{(q;q)_\infty}\Bigg(1-(q^2;q^2)_\infty+\sum_{n\ge 1}\frac{q^n}{1-q^n}\\
&\quad\quad\quad\quad\quad\quad-\sum_{n\ge 1}q^n(1+q^n)(q^2;q^2)_{n-1}\Bigg)\\
&=\frac{1}{(q;q)_\infty}\Bigg(1-(q^2;q^2)_\infty+\sum_{n\ge 1}\frac{q^n}{1-q^n}\\
&\quad\quad\quad\quad\quad\quad-\big(1-(q^2;q^2)_\infty\big)-\sum_{n\ge 1}q^{n}(q^2;q^2)_{n-1}\Bigg)\\
&=\frac{1}{(q;q)_\infty}\Bigg(\sum_{n\ge 1}\frac{q^n}{1-q^n}-\sum_{n\ge 1}q^n (q^2;q^2)_{n-1}\Bigg).
\end{align*}
This completes the proof of \eqref{eq:main-2}.

\section{Proof of Theorem \ref{th:asym}}

We first show that the nonnegative sequence $\{\sL(n)-\smaex(n)\}_{n\ge 1}$ is weakly increasing. To see this, we construct an injective map $\phi_n:\mathcal{P}_n \to \mathcal{P}_{n+1}$ (where $\mathcal{P}_n$ denotes the set of partitions of $n$) for each $n\ge 1$ by
$$\pi=(\pi_1,\pi_2,\ldots,\pi_\ell)\mapsto (\pi_1,\pi_2,\ldots,\pi_\ell,1).$$
Then $L(\phi_n(\pi))=L(\pi)$ and
$$\maex(\phi_n(\pi))=\begin{cases}
\maex(\pi) & \text{if $\maex(\pi)\ne 1$},
\\0 & \text{if $\maex(\pi)= 1$}.
\end{cases}$$
It follows that
\begin{align*}
\sL(n+1)-\smaex(n+1)&=\sum_{\lambda\vdash n+1}\big(L(\lambda)-\maex(\lambda)\big)\\
&\ge \sum_{\pi\vdash n} \big(L(\phi_n(\pi))-\maex(\phi_n(\pi))\big)\\
&\ge \sum_{\pi\vdash n} \big(L(\pi)-\maex(\pi)\big)\\
&=\sL(n)-\smaex(n).
\end{align*}

It turns out that we may apply Ingham's Tauberian theorem to obtain the asymptotic behavior of $\sL(n)-\smaex(n)$.

\begin{theorem}[Ingham \cite{Ing1941}]\label{th:ing}
	Let $f(q)=\sum_{n\ge0}a(n) q^n$ be a power series with weakly increasing nonnegative coefficients and radius of convergence equal to $1$. If there are constants $A>0$ and $\lambda,\alpha\in\mathbb{R}$ such that
	$$f\left(e^{-t}\right) \sim\lambda t^\alpha e^{\frac{A}{t}}$$
	as $t\to 0^+$, then
	$$a(n)\sim \frac{\lambda}{2\sqrt{\pi}}\frac{A^{\frac{\alpha}{2}+\frac14}}{n^{\frac{\alpha}{2}+\frac34}} e^{2\sqrt{An}}$$
	as $n\to \infty$.
\end{theorem}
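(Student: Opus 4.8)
The plan is to substitute $q=e^{-t}$ and study the exponential series $P(t):=f(e^{-t})=\sum_{n\ge0}a(n)e^{-nt}$, whose behaviour as $t\to0^+$ is exactly the hypothesis $P(t)\sim\lambda t^\alpha e^{A/t}$. The exponent $A/t+nt$ that governs the $n$-th coefficient has a unique minimum on $(0,\infty)$ at the saddle $t_n:=\sqrt{A/n}$, where its value is $2\sqrt{An}$; this is the source of the factor $e^{2\sqrt{An}}$. As a first, purely Abelian step I would record the crude consequence of positivity alone: $a(n)e^{-nt}\le P(t)$ gives $a(n)\le e^{nt}P(t)$, and evaluating at $t=t_n$ already fixes the correct exponential order $e^{2\sqrt{An}}$, albeit with a polynomial prefactor that is off by a power of $n$. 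The whole content of the theorem is to pin down the precise prefactor $\tfrac{\lambda}{2\sqrt\pi}A^{\alpha/2+1/4}n^{-\alpha/2-3/4}$, and for that the Tauberian hypothesis that $\{a(n)\}$ be weakly increasing must be exploited.

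Next I would reduce the pointwise statement to an estimate for the partial sums $A(x):=\sum_{n\le x}a(n)$, which are better adapted to Tauberian machinery because $A$ is a monotone distribution function and $P(t)=t\int_0^\infty A(x)e^{-xt}\,dx$. The hypothesis that $a(n)$ is nondecreasing means precisely that the increments of $A$ increase, so for any window width $h$ one has the exact sandwich
\[
\frac{A(n)-A(n-h)}{h}\le a(n)\le\frac{A(n+h)-A(n)}{h}.
\]
Since the natural scale on which $A$ changes multiplicatively is $\Delta x\asymp\sqrt{n/A}=t_n^{-1}$, choosing $h$ with $1\ll h\ll t_n^{-1}$ forces both difference quotients to the same limit, so that $a(n)\sim A'(n)$ provided $A(x)$ is known to slightly better than leading-order precision. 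Thus it suffices to find a sharp asymptotic for $A(x)$ and then differentiate it, the monotonicity of $\{a(n)\}$ supplying the deconvolution.

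To obtain $A(x)$ I would invert the Laplace relation $\int_0^\infty A(x)e^{-xt}\,dx=P(t)/t\sim\lambda t^{\alpha-1}e^{A/t}$ by the saddle-point method: the saddle of $A/t+xt$ sits at $t=\sqrt{A/x}$ with second derivative $2A/t^3=2x^{3/2}A^{-1/2}$, and the Gaussian contribution $\big(2\pi\cdot 2x^{3/2}A^{-1/2}\big)^{-1/2}=\big(2\sqrt\pi\big)^{-1}x^{-3/4}A^{1/4}$ yields
\[
A(x)\sim\frac{\lambda}{2\sqrt\pi}\,A^{\alpha/2-1/4}\,x^{-\alpha/2-1/4}\,e^{2\sqrt{Ax}}.
\]
Differentiating, the dominant term arises from $\tfrac{d}{dx}e^{2\sqrt{Ax}}=\sqrt{A/x}\,e^{2\sqrt{Ax}}$, which multiplies the prefactor by $\sqrt{A/n}$ and reproduces exactly the asserted asymptotic for $a(n)\sim A'(n)$. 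The main obstacle is to make this inversion rigorous with the correct constant: the hypothesis furnishes $P(t)$ only on the positive real axis, so the contour/saddle computation above cannot be run verbatim because there is no control of $P(t+i\theta)$ off the real axis, and one must instead extract the sharp constant for $A(x)$ from the real-variable asymptotic of its Laplace transform together with the monotonicity of $A$. This is the genuinely hard, and genuinely Tauberian, heart of the argument; I would expect to carry it out by a two-sided comparison that localizes the Laplace integral to the window $\lvert x-A/t^2\rvert\ll t^{-3/2}$ and uses the convexity of $A$ there to replace $A(x)$ by $A(A/t^2)$ times a Gaussian profile, matching the resulting integral against the known behaviour of $P(t)/t$ to read off the constant $\tfrac{\lambda}{2\sqrt\pi}A^{\alpha/2-1/4}$, and finally closing the deconvolution $a(n)\sim A'(n)$ with the window sandwich above.
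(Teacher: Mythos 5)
This statement is quoted in the paper without proof (it is Ingham's theorem, cited from \cite{Ing1941}), so there is no internal argument to compare against; the question is whether your sketch stands on its own, and it does not. Your outer scaffolding is sound: the Abelian bound $a(n)\le e^{nt}P(t)$ evaluated at $t_n=\sqrt{A/n}$, the identity $P(t)=t\int_0^\infty A(x)e^{-xt}\,dx$, and the deconvolution $a(n)\sim A'(n)$ via the sandwich $\frac{A(n)-A(n-h)}{h}\le a(n)\le \frac{A(n+h)-A(n)}{h}$ with $h=\varepsilon\sqrt{n/A}$ (letting $\varepsilon\to0$ after $n\to\infty$) are all standard and would correctly reduce the theorem to a sharp asymptotic for $A(x)$. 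But that reduction target is the entire content of Ingham's theorem, and you only announce a plan for it rather than executing it.

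Moreover, the plan as described would fail. The saddle of $A(u)e^{-ut}$ sits at $u_t=A/t^2$ with Gaussian width of order $t^{-3/2}$, but $A$ is \emph{not} approximately constant across that window: since $\frac{d}{du}\log A(u)\approx\sqrt{A/u}=t$ at the saddle, $A$ varies over the window by a factor $\exp\bigl(O(t\cdot t^{-3/2})\bigr)=\exp\bigl(O(t^{-1/2})\bigr)$, which is unbounded as $t\to0^+$. Hence "replace $A(x)$ by $A(A/t^2)$ times a Gaussian profile" is illegitimate; what the localized Laplace integral determines is a weighted average of $A$ against an exponentially varying kernel, and monotonicity or convexity of $A$ alone does not de-average this to a pointwise statement with the constant $\frac{\lambda}{2\sqrt{\pi}}A^{\alpha/2-1/4}$. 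Assuming that $A$ is log-quadratic near the saddle to the needed relative precision is exactly the regularity one is trying to prove, so the matching argument is circular. This is why the result is a genuine Tauberian theorem: Ingham's proof breaks the circle with a two-sided kernel-approximation device (approximating the relevant cutoffs by finite combinations of exponentials, in the spirit of Wiener's theory, adapted to the $\sqrt{n}$ scale), and some such mechanism --- not present in your sketch --- is indispensable. Note also that crude one-sided monotonicity bounds of the type $A(x)\int_x^{x+\delta}e^{-ut}\,du\le P(t)/t$ recover the exponent $e^{2\sqrt{Ax}}$ but provably lose the constant, so the gap cannot be closed by tightening those estimates alone.
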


Recall from \eqref{eq:dif} and \eqref{eq:sigma*} that
\begin{align*}
\sum_{n\ge 1}\big(\sL(n)-\smaex(n)\big)q^n&=\frac{1}{(q;q)_\infty}\sum_{n\ge 1}q^n (q^2;q^2)_{n-1}\\
&=-\frac{\sigma^*(q)}{2(q;q)_\infty}.
\end{align*}
Now the modular inversion formula for Dedekind's eta-function (p.~121, Proposition 14 of \cite{Kob1984}) implies that, as $t\to 0^+$,
\begin{equation}
\frac{1}{(e^{-t};e^{-t})_\infty}\sim \sqrt{\frac{t}{2\pi}}e^{\frac{\pi^2}{6t}}.
\end{equation}
On the other hand, Zagier \cite{Zag2010} showed that if we take $q=\xi e^{-t}$ with $\xi$ a root of unity, then the identity \eqref{eq:s-s*} remains true as an identity in $\mathbb{Q}[\xi][[t]]$. Taking $\xi=1$, Zagier further obtained the expansion
\begin{equation}
-\sigma^{*}(e^{-t})=2+2t+5t^2+\frac{55}{3}t^3+\frac{1073}{12}t^4+\frac{32671}{60}t^5+\frac{286333}{72}t^6+\cdots
\end{equation}
as $t\to 0$. Hence, as $t\to 0^+$,
$$\Bigg[\sum_{n\ge 1}\big(\sL(n)-\smaex(n)\big)q^n\Bigg]_{q=e^{-t}}\sim \sqrt{\frac{t}{2\pi}}e^{\frac{\pi^2}{6t}}.$$

Finally, \eqref{eq:as-1} follows from Ingham's Tauberian theorem. Further, \eqref{eq:as-2} can be deduced by comparing \eqref{eq:as-1} with \eqref{eq:as-0}. Also, we know that the number of partitions of $n$ satisfies
$$p(n)\sim \frac{1}{4\sqrt{3}n}e^{\pi \sqrt{\frac{2n}{3}}}$$
as $n\to \infty$. Hence,
$$\lim_{n\to\infty}\mathbb{E}_n=\lim_{n\to\infty}\frac{\sL(n)-\smaex(n)}{p(n)}=1.$$

\bibliographystyle{amsplain}

\begin{thebibliography}{9}
	
\bibitem{And1976}
G. E. Andrews, \textit{The theory of partitions}, Encyclopedia of Mathematics and its Applications, Vol. 2. Addison-Wesley Publishing Co., Reading, Mass.-London-Amsterdam, 1976. xiv+255 pp.
	
\bibitem{ADH1988}
G. E. Andrews, F. J. Dyson, and D. Hickerson, Partitions and indefinite quadratic forms, \textit{Invent. Math.} \textbf{91} (1988), no. 3, 391--407.
	
\bibitem{AN2019}
G. E. Andrews and D. Newman, Partitions and the minimal excludant, \textit{Ann. Comb.}, in press.

\bibitem{Coh1988}
H. Cohen, $q$-Identities for Maass waveforms, \textit{Invent. Math.} \textbf{91} (1988), no. 3, 409--422.

\bibitem{Ing1941}
A. E. Ingham, A Tauberian theorem for partitions, \textit{Ann. of Math. (2)} \textbf{42} (1941), 1075--1090.

\bibitem{KL1976}
I. Kessler and M. Livingston, The expected number of parts in a partition of $n$, \textit{Monatsh. Math.} \textbf{81} (1976), no. 3, 203--212.

\bibitem{Kob1984}
N. Koblitz, \textit{Introduction to elliptic curves and modular forms}, Graduate Texts in Mathematics, 97. Springer-Verlag, New York, 1984. viii+248 pp.

\bibitem{Zag2010}
D. Zagier, Quantum modular forms, in: \textit{Quanta of maths}, 659--675, Clay Math. Proc., 11, Amer. Math. Soc., Providence, RI, 2010.

\end{thebibliography}

\end{document}